\newtheorem{restate}{Theorem}
\newcommand{\card}[1]{\ensuremath{\left|#1\right|}}
\newcommand{\bd}[1]{{\boldsymbol #1}}
\begin{document}

\title{On a Conjecture of Butler and Graham}

\author{
Tengyu Ma
\inst{1} \and Xiaoming Sun\inst{2} \and Huacheng Yu\inst{1} }

\institute{Institute for Interdisciplinary Information Sciences,
Tsinghua University
\and
Institute of Computing Technology, Chinese Academy of Sciences
}

\date{}

\maketitle

\begin{abstract}
Motivated by a hat guessing problem proposed by Iwasawa~\cite{Iwasawa10}, Butler and Graham~\cite{Butler11} made the following conjecture on the existence of certain way of marking the {\em coordinate lines} in $[k]^n$: there exists a way to mark one point on each {\em coordinate line} in $[k]^n$, so that every point in $[k]^n$ is marked exactly $a$ or $b$ times as long as the parameters $(a,b,n,k)$ satisfies that there are non-negative integers $s$ and $t$ such that $s+t = k^n$ and $as+bt = nk^{n-1}$. In this paper we prove this conjecture for any prime number $k$. Moreover, we prove the conjecture for the case when $a=0$ for general $k$.

%
%

\vskip6pt
{\bf Keywords:} hat guessing games, marking coordinate lines, characteristic function

{\bf MSC classes:} 00A08 97A20 94B05
\end{abstract}

\setcounter{footnote}{0}

\section{Introduction}

In~\cite{Butler11} Butler and Graham considered the problem of the existence of certain way of marking coordinate lines in $[k]^n$.
A \textit{coordinate line} in $[k]^n$ is the set of $k$ points in which all but one coordinate are fixed and the unfixed coordinate varies over all possibilities.
\textit{Marking} a line\footnote{for convenience, we use line to indicate coordinate line throughout this paper.} means designating a point on that line.
They conjectured that
\begin{conjecture}[Butler, Graham~\cite{Butler11}]\label{conj}
There is a marking of the lines in $[k]^n$ so that on each line exactly one
point is marked and each point is marked either $a$ or $b$ times if and only if there are nonnegative integers $s$ and $t$ satisfying the linear equations $s + t = k^n$ and $as + bt = nk^{n-1}$.
\end{conjecture}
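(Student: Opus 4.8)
The forward implication is immediate: given such a marking, let $s$ and $t$ be the numbers of points marked $a$ and $b$ times; counting points gives $s+t=k^n$ and counting marks (one per line, $nk^{n-1}$ lines in all) gives $as+bt=nk^{n-1}$. For the converse I would first record the feasibility region of $(s,t)$: writing $b>a$ without loss of generality, the equations force $a\le n/k\le b$ together with the divisibility condition $(b-a)\mid k^{n-1}(n-ak)$, and these are exactly what I must exploit. I would then reformulate a marking as a tuple $(g_1,\dots,g_n)$ of characteristic functions $g_i\colon[k]^n\to\{0,1\}$, where $g_i$ marks the chosen points in direction $i$, constrained to sum to $1$ on every line in direction $i$; the number of marks at a point $y$ is $c(y)=\sum_i g_i(y)$, and since $y$ lies on exactly one line per direction the per-direction constraints never interact. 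Consequently a prescribed count function $c$ (with $\sum_y c(y)=nk^{n-1}$) is realizable \emph{iff} the bipartite transportation problem ``each line ships one unit to a point on it, each point $y$ receives $c(y)$ units'' is feasible, which by total unimodularity and Hall's theorem holds iff $\sum_{y\in Y'}c(y)\le \#\{\text{lines meeting }Y'\}$ for every $Y'\subseteq[k]^n$.

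\textbf{Constructing the marking for prime $k$.} The task becomes: place $t$ values equal to $b$ and $s$ values equal to $a$ so that the Hall inequality above holds, and I would do this by an explicit recursion on $n$ that keeps the placement structured enough to verify feasibility. Slicing off the last coordinate writes $[k]^n=[k]^{n-1}\times[k]$, with count $C(x,j)=c_j(x)+[\phi(x)=j]$, where $c_j$ is the count of a marking of the layer $[k]^{n-1}\times\{j\}$ and $\phi\colon[k]^{n-1}\to[k]$ records the marks in the last direction. I would design the $k$ layer-markings and the selector $\phi$ so that each column profile $(c_0(x),\dots,c_{k-1}(x))$ lies in $\{a,b\}$ except possibly at $j=\phi(x)$, where its value is $a-1$ or $b-1$; adding the vertical mark there repairs it into $\{a,b\}$. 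Here primality enters twice: I can use the $\mathbb{F}_k$-linear covering sets $\{y:\sum_i y_i\equiv r\}$, which meet every line exactly once, as the elementary building blocks that move counts up or down by controlled amounts, and I can use invertibility modulo $k$ to realize exactly the residue of $t$ demanded by the divisibility condition. The cleanest base case is $b=a+1$, where no divisibility is needed ($t=k^{n-1}(n-ak)$ is automatically an integer whenever $a\le n/k\le a+1$); I would establish this consecutive case first and then bootstrap to a general gap $b-a$ by a combination argument, the divisibility condition being precisely what guarantees the combined count collapses back onto the two values $a$ and $b$.

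\textbf{The case $a=0$ and the main obstacle.} For $a=0$ and general $k$ the problem simplifies: every line's mark must land in the support $S$ of $c$, and each point of $S$ must absorb exactly $b$ marks, so I need a set $S$ of size $t=nk^{n-1}/b$ that meets every line and admits a $b$-regular assignment of lines. I would take $S$ to be an explicit union of the linear covering sets $\{y:\sum_i y_i\equiv r\}$ (or an analogous algebraic set) and verify the transportation feasibility directly, using that each such covering set meets every line exactly once so the incidence counts are transparent; the divisibility $b\mid nk^{n-1}$ is exactly the integrality needed for the $b$-regular assignment to exist. The step I expect to be hardest is the general-$(a,b)$ inductive closure for prime $k$: verifying the global Hall condition for \emph{every} $Y'$ after the combination step while simultaneously matching the exact count $t$ dictated by the divisibility is delicate, and it is here that the field structure of $\mathbb{F}_k$ (exact control of residues and clean covering sets) does the real work, whereas for composite $k$ only the transparent $a=0$ instance survives.
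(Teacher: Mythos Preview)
Your transportation/Hall reformulation is valid, but it shifts all the difficulty into producing a concrete count function $c$ and then \emph{verifying} the Hall inequality for it, and this is precisely the step you never carry out. The recursion you sketch has a structural problem: slicing off a \emph{single} coordinate forces each layer $[k]^{n-1}\times\{j\}$ to carry a count function taking values in $\{a-1,a,b-1,b\}$ (since you only repair one layer per column via $\phi$), so the layer problem is not an instance of the same two-value conjecture and the induction does not close. The paper's induction (its Proposition~\ref{induction}) avoids this by peeling off $k$ coordinates at once, so that the single remaining block is a genuine $[a-1,b-1]_k^{n-k}$; your one-coordinate slicing cannot replicate this. Your ``bootstrap from $b=a+1$ to general $b-a$ by combination'' is likewise a placeholder: combining two two-valued markings generically produces three or four values, and you give no mechanism for collapsing them back to two.

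The paper's route is quite different from yours and entirely constructive, with no Hall-type argument. After the $k$-step induction, it is left with two explicit base families, $[a,n-t]_k^n$ with $t<k$ and $[0,b]_k^n$. For each it partitions the coordinates into three blocks, defines a $\mathbb{Z}_k^m$-valued characteristic function $\bd q(\bd x)=\sum_{\bd i} x_{\bd i,j}\cdot\bd i$ on one block, and writes down an explicit marking rule line by line, governed jointly by $\bd q$ and the $k$-ary parity $\oplus$. Primality enters exactly where you expected---the values $\bd q(\bd x-s\cdot\bd e_{i,j})$ are pairwise distinct over $i$ because $s$ is invertible in $\mathbb{Z}_k$---but this is used to count marks at a point directly, not to check a Hall condition. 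For $a=0$ and composite $k$, the paper does not take a union of affine hyperplanes as you propose (that alone does not give the $b$-regular assignment); instead it builds a more delicate characteristic function $\bd q:[k]^b\to[b]$ via a mixed-radix $\circledast$ operator over $\prod \mathbb{Z}_{p_j}^{\alpha_j}$, engineered so that some specific shift $s^*$ still permutes the fibres. Your proposal identifies the right ingredients (parity hyperplanes, invertibility mod $k$) but does not assemble them into a proof; the paper's characteristic-function construction is the missing idea.
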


The ``only if'' part of the conjecture is straightforward, leaving to be crucial the construction of a marking of lines in $[k]^n$ with the desired properties. Buhler, Butler, Graham, and Tressler~\cite{Buhler10} proved the conjecture for $k = 2$. Butler and Graham~\cite{Butler11} proved the conjecture when $n \le 5$. The main contributions of this paper are i) we prove all the cases when $k$ is an odd prime; ii) we prove the case when $a = 0$ (without any assumption on $k$).

\begin{theorem}\label{primeTheorem}
For any prime $k$ and $0\leq a < b\leq n$, there exists a marking of lines in $[k]^n$ so that each point is marked either $a$ times or $b$ times if and only if there are nonnegative integers $s$ and $t$ so that $s+t = k^n$ and $as+bt = nk^{n-1}$.
\end{theorem}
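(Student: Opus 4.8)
The plan is to reformulate a marking as a choice, for each coordinate line, of the point it designates, and to encode this choice by the integer $m(x) = \#\{i : \text{the line through } x \text{ in direction } i \text{ selects } x\}$. The ``one point per line'' constraint becomes automatic if I realize the marking by affine forms over $\mathbb{F}_k$: pick $L_i(x) = \langle v_i, x\rangle - d_i$ with the $i$-th coordinate of $v_i$ nonzero, designate on each direction-$i$ line the unique point where $L_i$ vanishes, so that $m(x) = \#\{i : L_i(x) = 0\}$. Solving the defining system gives $s = k^{n-1}(bk-n)/(b-a)$ and $t = k^{n-1}(n-ak)/(b-a)$, and the goal reduces to producing affine forms with $m \colon [k]^n \to \{a,b\}$ and $\card{m^{-1}(b)} = t$. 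Here primality of $k$ is what lets me work with $\mathbb{F}_k$ as a field and count fibers of linear maps cleanly.

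Next I would set up two reductions that dispose of most instances. First, a \emph{constant slab}: a block of $k$ directions all sharing one fixed all-nonzero direction vector $v$, with the $k$ constants $d_i$ ranging over the $k$ residues; since $\langle v,x\rangle$ always equals exactly one residue, this block contributes precisely $1$ to $m(x)$ at every point. Chaining $a$ such slabs (using the remaining coordinates for the bulk of the construction) lowers $a$ to $0$ and $n$ to $n-ak$ while leaving $s,t$ unchanged, so the general-$a$ problem collapses onto the $a=0$ case \emph{whenever the reduced instance is still feasible}, dovetailing with the separately established $a=0$ result. Second, a \emph{bucketing} construction: take all $L_i$ to share a single all-nonzero direction $v$ and distribute the constants $d_i$ among the residues so that each residue-bin has size $a$ or $b$; then $m(x) = n_{\langle v,x\rangle} \in \{a,b\}$, and this solves every instance satisfying the stronger divisibility $(b-a)\mid(bk-n)$.

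The remaining, genuinely hard case is exactly when the $k$-adic valuation of $b-a$ exceeds that of $bk-n$ — equivalently, $t$ is not divisible by $k^{n-1}$, so the slab reduction overshoots the feasibility range of the corresponding $a=0$ instance. Here the easy constructions provably fail: any marking assembled from forms sharing a small common set of directions is \emph{separable}, $m(x) = \sum_j h_j(\langle u_j, x\rangle)$, and a sum of two or more nonconstant $\{0,\delta\}$-type summands already spreads over at least three values. So I would need a genuinely $n$-dimensional arrangement of affine hyperplanes over $\mathbb{F}_k$ whose level sets have the prescribed, less-divisible sizes. My plan is to build these by induction on $n$ together with the valuation $v_k(b-a)$, using the field structure of $\mathbb{F}_k$ to control how the hyperplanes $\{L_i = 0\}$ intersect, and to verify that $m$ is two-valued by analysing the characteristic function of the target $b$-level set through its character (Fourier) expansion over $\mathbb{Z}_k^n$.

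I expect this last step to be the main obstacle. Forcing an affine hyperplane arrangement to take only the two values $a$ and $b$ while simultaneously realizing a count $t$ whose $k$-adic valuation is strictly smaller than $n-1$ is a rigid requirement: the coarse (single-direction) arrangements are too divisible and the generic ones are not two-valued, so the construction must sit in a narrow middle ground that only the field arithmetic of prime $k$ seems to make accessible. The reductions of the previous paragraphs and the divisible case are, by contrast, routine, and I would present them first precisely to isolate this core difficulty.
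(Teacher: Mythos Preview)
Your slab reduction is exactly the paper's inductive step (Proposition~\ref{induction}): a block of $k$ directions sharing one all-nonzero vector with constants ranging over $\mathbb{Z}_k$ contributes $1$ everywhere, so $[a-1,b-1]_k^{n-k}\Rightarrow[a,b]_k^n$. Your bucketing, where all $n$ forms share one direction and the constants are distributed into residue-bins of sizes $a$ or $b$, is the special case of the paper's Theorem~\ref{a,n-t} in which $b-a$ is coprime to $k$ (equivalently $(b-a)\mid(n-ka)$, i.e.\ $m=0$ in the decomposition $b-a=k^m r$). Your identification of the residual hard case as $v_k(b-a)>v_k(bk-n)$ is also correct. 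So the skeleton matches the paper.

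The gap is that for this hard case you have only a strategy (``induction on $n$ and $v_k(b-a)$, Fourier analysis''), not a construction, and you say so yourself. The paper fills it with an explicit marking (Theorem~\ref{a,n-t}): writing $b-a=k^m r$ with $\gcd(r,k)=1$, one splits the coordinates as $[k]^{k^m r}\times[k]^t\times[k]^a$, indexes the first block by $(\bd{i},j)\in\mathbb{Z}_k^m\times[r]$, and defines a linear characteristic $\bd{q}(\bd{x})=\sum_{\bd{i},j}x_{\bd{i},j}\cdot\bd{i}\in\mathbb{Z}_k^m$. One then fixes a set $M$ of points with $\bd{q}(\bd{x})$ in a chosen $a'$-element subset of $\mathbb{Z}_k^m$ and $\oplus=0$, and the rule on an $\bd{x}$-line \emph{branches}: if the line meets $M$, mark the point at parity $s$ (where $s$ is read off from a fixed partition $L_1\sqcup\cdots\sqcup L_{k-1}$ of $[a']\times[r]$ with $|L_s|\in\{a-1,a\}$); otherwise mark the point at parity $0$. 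The verification hinges on the fact that for each fixed nonzero $s$, the values $\bd{q}(\bd{x}-s\,\bd{e}_{\bd{i},j})$ are pairwise distinct as $\bd{i}$ ranges over $\mathbb{Z}_k^m$ (here primality of $k$ enters, making $s$ invertible), so exactly $a'r$ of the $\bd{x}$-lines through a parity-$s$ point meet $M$, and exactly $|L_s|$ of those mark it.

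Note that this rule is \emph{not} of your form $L_i(x)=\langle v_i,x\rangle-d_i$: it is piecewise affine, switching between two affine targets according to the non-linear predicate ``line meets $M$''. Your own separability observation already suggests why: a family of $n$ genuine affine forms over $\mathbb{F}_k$ has $m(x)$ determined by at most $n$ linear functionals, and forcing it to be two-valued with a $b$-fiber whose size is not divisible by $k^{n-1}$ is exactly the obstruction you ran into. So insisting on the purely affine framework is likely a dead end for the hard case rather than merely a harder route; the paper's construction steps outside it deliberately.
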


\begin{theorem}\label{a0Theorem}
For $0< b\leq n$, there exists a marking of lines in $[k]^n$ so that each point is either unmarked or marked $b$ times if and only if there are nonnegative integers $s$ and $t$ so that $s+t = k^n$ and $bt = nk^{n-1}$.
\end{theorem}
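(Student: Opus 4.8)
The plan is to first reformulate the construction in a way that makes the role of $a=0$ transparent. Since unmarked points must receive $0$ marks, every line must designate a point that is itself marked, and every marked point must be designated by exactly $b$ of the $n$ lines through it. Thus building the marking is equivalent to choosing $n$ sets $M_1,\dots,M_n\subseteq[k]^n$, where $M_i$ contains exactly one point of each coordinate line in direction $i$ (i.e.\ $M_i$ is the graph of a function of the other $n-1$ coordinates), such that every point of $[k]^n$ lies in either $0$ or exactly $b$ of the $M_i$. Given such sets, designating on each direction-$i$ line its unique point of $M_i$ yields a marking whose mark-count at $p$ equals $\#\{i:p\in M_i\}\in\{0,b\}$; conversely any valid marking produces such sets. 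Setting $M=\bigcup_i M_i$, one checks $|M|=nk^{n-1}/b=t$, so the ``if'' direction amounts to producing these functional sets whenever $t$ is a nonnegative integer with $t\le k^n$, equivalently $n/k\le b\le n$.

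I would first dispose of the case $b\mid n$ by an explicit single-form construction. Put $m=n/b$; since $b\ge n/k$ we have $m\le k$, so we may pick targets $c_1,\dots,c_n\in\mathbb{Z}_k$ taking $m$ distinct values, each exactly $b$ times, and set $M_i=\{x:\sum_j x_j\equiv c_i \pmod{k}\}$. Each $M_i$ is functional in direction $i$ because the coefficient of $x_i$ is a unit, and a point whose coordinate-sum is $v$ lies in exactly $\#\{i:c_i=v\}\in\{0,b\}$ of the sets, as required.

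The crux is the case $b\nmid n$. Writing $b=b'b_k$ with $b'$ the part of $b$ coprime to $k$, divisibility forces $b'\mid n$ while the remaining factor $b_k$ divides a power of $k$; then $t=nk^{n-1}/b$ is not a multiple of $k^{n-1}$ (since $n/b\notin\mathbb{Z}$), so $M$ can no longer be a union of full level sets of $\sum_j x_j$ and the single-form construction breaks down. The obstacle is to subdivide those level sets using the internal coordinate structure (the factor $k^{n-1}$) while keeping each $M_i$ functional in its own direction. I would resolve this by letting the $n$ defining forms differ across directions, generalizing the $\mathbb{Z}_2$, triangle ``cut'' example in which $M_i=\{x:x_i+x_{\sigma(i)}=1\}$ makes the mark-count a cut size lying in $\{0,2\}$. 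Concretely, choose a linear map $A$ over $\mathbb{Z}_k$ with units on the diagonal (so each $M_i=\{x:(Ax)_i=c_i\}$ is functional) together with a shift $c$, so that the coset $\mathrm{Im}(A)-c$ has all its words of Hamming weight in $\{n-b,n\}$; since the mark-count of $x$ equals $n-\mathrm{wt}(Ax-c)$, this gives coverage in $\{0,b\}$, and the count $(k^n/|\mathrm{Im}(A)|)\cdot\#\{v\in\mathrm{Im}(A):\mathrm{wt}(v-c)=n-b\}$ must be forced to equal $t$. This turns the hard case into a coding-theoretic existence statement about two-weight cosets, which I expect to be the technically demanding step. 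As a fallback that avoids explicit codes, one can fix a set $M$ of size $t$ meeting every line and invoke a Hall/flow deficiency condition --- the required bipartite subgraph that is $b$-regular on $M$ and perfectly matched on the lines exists iff $b\,|N(S)|\ge|S|$ for every set $S$ of lines --- reducing the problem to verifying this inequality for a well-chosen $M$.
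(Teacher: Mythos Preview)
Your reformulation via the functional sets $M_i$ is correct, and your treatment of the case $b\mid n$ is complete; it coincides with what the paper's construction degenerates to when the remainder $h=0$ in the decomposition $n=tb+h$. The genuine gap is the case $b\nmid n$, which you leave as either a coding-theoretic existence statement or a Hall-type verification, neither of which you carry out. The linear ansatz $M_i=\{x:(Ax)_i=c_i\}$ is almost certainly too rigid: if $A$ is invertible then $Ax-c$ ranges over all of $\mathbb{Z}_k^n$ and hence hits weight $0\notin\{n-b,n\}$; if $A$ is singular you are asking for a coset of a submodule of $\mathbb{Z}_k^n$ (a \emph{ring}, not a field, when $k$ is composite) all of whose elements have Hamming weight in $\{n-b,n\}$, with the unit-diagonal constraint on $A$ and a prescribed weight distribution. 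That is a strong two-weight condition for which you give no construction and no existence argument. The Hall fallback merely relocates the difficulty to the choice of $M$ and the verification of $b\,|N(S)|\ge|S|$, which you also do not address.

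The paper resolves $b\nmid n$ by a route you partly anticipate but do not exploit. You note that writing $b=b'b_k$ with $\gcd(b',k)=1$ forces $b'\mid n$; the paper uses exactly this, together with the multiplicative lemma $[0,b]_k^n\Rightarrow[0,br]_k^{rn}$, to reduce to the case where every prime factor of $b$ divides $k$. In that case the paper constructs an explicit characteristic function $\bd{q}:[k]^b\to[b]$ (viewing $[k]$ and $[b]$ as products of cyclic $p$-groups for the common primes) with the key property that for a specific shift $s^*$, the map $i\mapsto\bd{q}(\bd{x}-s^*\bd{e}_i)$ is a bijection of $[b]$ for every $\bd{x}$. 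This bijectivity is what lets one block of $b$ coordinates absorb the remainder $h$: on a line in that block one marks the point lying in a set $M$ (defined via $\bd{q}$ and the parity $\oplus$) if the line meets $M$, and otherwise marks by parity $s^*$. The resulting $M_i$ in that block are defined by a \emph{conditional} rule, not a single linear form, so the construction lies outside your framework; the missing idea in your proposal is precisely this characteristic function and the reduction that makes it applicable.
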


As in~\cite{Butler11}, we use the notation $[a,b]_k^n$ as a shorthand for a realization of a marking of the lines in $[k]^n$ where each point is marked either $a$ times or $b$ times. Then Theorem~\ref{primeTheorem} provides a sufficient and necessary condition for the existence of $[a,b]_k^n$ for any prime $k$, and Theorem~\ref{a0Theorem} considers the existence of $[a,b]_k^n$ when $a = 0$.

In the proof of Theorem~\ref{primeTheorem} we reduce the existence of $[a,b]_k^n$ to the existence of $[a-1,b-1]_k^{n-k}$ when $a>0$ and $b\le n- k+ 1$~(see Proposition~\ref{induction} in Section~\ref{section-prime}). It turns out that the most complicated part of this inductive argument is the construction of the base cases, that is, $[0,b]_k^n$ and $[a,n-t]_k^n$~(where $t < k$). We provide two theorems~(Theorem~\ref{a,n-t} and \ref{0b}) giving a direct realizations for these two kinds of markings. The proofs of these theorems use similar approaches, though different in many details, that we partition the whole grid $[k]^n$ according to certain number theory based characteristic function, which has a nice symmetrical property: Informally speaking, for any fixed $s$, any point $\vec{x}\in [k]^n$, and any value $\bd{v}$ in the range of this function, there exists a unique direction along which by moving from $\bd{x}$ with distance $s$, we can reach a point with value $\bd{v}$. By a sophisticated utilizing this property, we accomplish the design of markings for these two base cases. Furthermore, we prove the case $[0,b]_k^n$ for general $k$ by generalizing the characteristic function in a delicate way.

\vskip10pt

\noindent {\bf Related Work}

The motivation of investigating this marking line problem is to reformulate and solve a hat guessing question proposed by Iwasawa~\cite{Iwasawa10}.
In that game there are several players sitting around a table, each of which is assigned a hat with one of $k$ colors. Each player can see all the colors of others' hat but his/her own. The players try to coordinate a strategy before the game starts, and guess the colors of their own hats simultaneously and independently after the hats are placed on their heads. Their goal is to design a strategy that guarantees exactly either $a$ or $b$ correct guesses. For example, one special case is that either everybody guesses the color correctly or nobody guesses correctly, i.e. $a=0$ and $b=n$.

Several variations of hat guessing game have been considered in the literature. Ebert~\cite{Ebert98} considered the model that players are allowed to answer ``unknown". He showed that in this model there is a {\em perfect strategy} for players when $n$ is of the form $2^m-1$. Lenstra and Seroussi~\cite{Lenstra05} studied the case that $n$ is not of such form. Butler, Hajiaghayi, Kleinberg and Leighton~\cite{Butler09} considered the worst case that each player can see only part of the others' hats with respect to a sight graph. Feige~\cite{Feige10} investigated the average case with a sight graph. Peterson and Stinson~\cite{PS10} investigated the case that each player can see hats in front of him and they guess one by
one. Recently Ma, Sun and Yu \cite{MSY11} proposed a variation which allow to answer ``unknown" and require at least $k$ correct guesses for winning condition.


\vskip10pt
\noindent{\bf Notations and Preliminaries}


\noindent
$[k]=\{1,2,\ldots,k\}$, $[k]^n=\underbrace{[k]\times \cdots \times[k]}_{n}$. $[a,b]_k^n$: marking of
lines in $[k]^n$ in which each point is marked either $a$ times or $b$ times. We assume that $a< b$ as well.
Throughout the paper we always use boldface type letters for vectors and vector-valued functions.
For a vector $\bd{x}=(x_1,\ldots,x_n)$, define the $k$-modulo parity function $\oplus(\bd{x})=x_1+x_2+\cdots+x_n\mod k$.
We denote by $\bd{x}_{-i}$ the line $(x_1,\dots,x_{i-1},*, x_{i+1},\dots,x_n)$, i.e.
$$\bd{x}_{-i}=\{(x_1,\ldots,x_{i-1},y,x_{i+1},\ldots,x_n)~|~y\in [k]\}.$$

The necessary condition in the conjecture is straightforward.
\begin{proposition}[\cite{Buhler10}] \label{necessary}
If we have $[a,b]_k^n$, then the following equations system has nonnegative integer solution.
$$\left\{
\begin{array}{l}
  s+t=k^n, \\
  as+bt=k^{n-1}n.
\end{array}
\right.
$$
More specifically, $s = \frac{k^{n-1}(kb-n)}{b-a}$ is the number of points that are marked $a$ times, and $t = \frac{k^{n-1}(n-ka)}{b-a}$ is the number of points that are marked $b$ times.
\end{proposition}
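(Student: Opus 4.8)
The plan is to prove both equations in the system by a double-counting argument on the total number of marks, and then to recover the explicit formulas for $s$ and $t$ by solving the resulting $2\times 2$ linear system. The whole statement is essentially a consequence of counting incidences in two ways, so there is no deep obstacle; the only point requiring care is to see which quantities are being equated.

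First I would count the coordinate lines in $[k]^n$. Choosing which coordinate varies gives $n$ directions, and for each direction there are $k^{n-1}$ ways to fix the remaining $n-1$ coordinates, so there are exactly $nk^{n-1}$ lines. Since a marking designates precisely one point on each line, the total number of (line, marked point) incidences, which is the same as the total number of marks summed over all points, is exactly $nk^{n-1}$.

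Next, let $s$ and $t$ denote the number of points that are marked $a$ times and $b$ times, respectively. Because every one of the $k^n$ points is marked either $a$ or $b$ times, these two classes partition $[k]^n$, which gives $s+t=k^n$. Counting the same total number of marks by grouping the points according to how often they are marked yields $as+bt=nk^{n-1}$. Hence $(s,t)$ is a solution of the stated system, and since $s$ and $t$ are cardinalities they are automatically nonnegative integers. This is precisely what makes the existence of $[a,b]_k^n$ force the system to admit a nonnegative integer solution, establishing the necessity direction of Conjecture~\ref{conj}.

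Finally, to obtain the explicit expressions I would solve the linear system, whose coefficient matrix has determinant $b-a\neq 0$ by the assumption $a<b$, so the solution is unique. Substituting $t=k^n-s$ into $as+bt=nk^{n-1}$ gives $(a-b)s=nk^{n-1}-bk^{n}=k^{n-1}(n-bk)$, whence $s=\tfrac{k^{n-1}(kb-n)}{b-a}$, and plugging back yields $t=\tfrac{k^{n-1}(n-ka)}{b-a}$. The only thing worth flagging is that $a\neq b$ guarantees both that the system is nondegenerate and that the displayed formulas are well-defined.
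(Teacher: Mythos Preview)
Your argument is correct and is exactly the standard double-counting argument that the paper has in mind; indeed the paper does not spell out a proof of this proposition, simply citing \cite{Buhler10} and calling the necessity direction ``straightforward.'' Your counting of lines as $nk^{n-1}$, the partition $s+t=k^n$, the incidence count $as+bt=nk^{n-1}$, and the final inversion using $a\neq b$ are precisely what is intended.
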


\vskip10pt

The rest of the paper is organized as follows: In Section~\ref{section-prime} we show that the necessary condition is sufficient when the number of colors $k$ is a prime. Section~\ref{section-general} considers the case for general $k$ when $a=0$. Finally we conclude the paper in Section~\ref{section-conclusion} with some open problems.

\section{$[a,b]_k^n$ for Prime $k$}\label{section-prime}

In this section we prove the conjecture when $k$ is an odd prime number (the case $k=2$ has been proved by Buhler et al.~\cite{Buhler10}). The first step is to reduce $[a,b]_k^n$ to $[a-1,b-1]_{k}^{n-k}$ as in Butler and Graham \cite{Butler11}.

\begin{proposition}[\cite{Butler11}] \label{induction}
Given $[a,b]_{k}^{n}$, we have $[a+1,b+1]_k^{n+k}$.
\end{proposition}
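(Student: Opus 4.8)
The plan is to append $k$ fresh coordinates and treat the old and new directions separately. Write a point of $[k]^{n+k}$ as $(\bd{x},\bd{y})$ with $\bd{x}\in[k]^n$ and $\bd{y}\in[k]^k$. For the $n$ old directions I would simply reuse the given $[a,b]_k^n$ marking inside each slice obtained by fixing $\bd{y}$: a coordinate line in an old direction $i\le n$ has the form $\bd{x}_{-i}$ with $\bd{y}$ held fixed, so I mark on it exactly the point that the original marking selects on $\bd{x}_{-i}$ in $[k]^n$. Since each $\bd{y}$-slice is an independent copy of $[k]^n$, every old line still carries exactly one mark, and the point $(\bd{x},\bd{y})$ collects from the old directions precisely the number of marks that $\bd{x}$ receives in $[a,b]_k^n$, namely $a$ or $b$.

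It then remains to make the $k$ new directions contribute exactly one additional mark to every point, i.e. to build a $[1,1]_k^k$ marking on each sub-grid $\{(\bd{x},\bd{y}):\bd{y}\in[k]^k\}$; adding this to the $a$ or $b$ from the old directions yields the desired $a+1$ or $b+1$. The key construction is to let the $k$-modulo parity function assign to each $\bd{y}$ a single distinguished new direction: on the new line obtained by fixing $\bd{x}$ and all new coordinates except the $i$-th (for $1\le i\le k$), I mark the unique point whose $\bd{y}$-part satisfies $\oplus(\bd{y})=i-1$. As $y_i$ ranges over $[k]$ the value $\oplus(\bd{y})$ runs through every residue modulo $k$ exactly once, so each new line indeed marks exactly one point, independently of $\bd{x}$.

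Finally I would count the new marks at a point $(\bd{x},\bd{y})$: it is marked by the new line in direction $n+i$ precisely when $\oplus(\bd{y})=i-1$, and since $\oplus(\bd{y})$ equals exactly one residue in $\{0,\dots,k-1\}$, this happens for exactly one value of $i$. Hence every point receives exactly one mark from the new directions, and the two contributions combine to give a valid $[a+1,b+1]_k^{n+k}$. The only genuinely creative step is recognizing that the reduction hinges on a self-contained $[1,1]_k^k$ marking and realizing it through the parity function, which simultaneously guarantees one mark per new line and one new mark per point; the remaining verifications are routine bookkeeping, and notably this argument needs no primality assumption on $k$.
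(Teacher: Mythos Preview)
Your argument is correct. The paper does not supply its own proof of this proposition---it is quoted directly from \cite{Butler11}---so there is nothing in the present paper to compare against, but your construction (copying the given $[a,b]_k^n$ on each $\bd y$-slice and overlaying a $[1,1]_k^k$ on the $k$ new coordinates via the parity rule $\oplus(\bd y)=i-1$) is exactly the standard reduction and needs no primality hypothesis on $k$.
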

By repeatedly using this Proposition, we can reduce the problem $[a,b]_k^n$ to two possible cases: (1) $[a',b']_k^{n'}$, where $b'>n'-k$; (2) $[0,b']_k^{n'}$~(recall that $a<b$). During this procedure, the divisibility is unchanged (see the proof of Theorem~\ref{primeTheorem}). The following two theorems~(Theorem~\ref{a,n-t} and Theorem~\ref{0b}) give the constructions of two base cases, respectively.


\begin{theorem}\label{a,n-t}
If $k$ is a prime and $ 0 \le t \le k-1$, $1\le a< n-t$, and $(\frac{n-ka}{n-t-a})k^{n-1}$ is a nonnegative integer, then we have $[a,n-t]_k^n$.
\end{theorem}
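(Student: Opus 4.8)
The ``only if'' direction is exactly Proposition~\ref{necessary} applied with $b=n-t$, so the task is to \emph{construct} a marking of the lines in $[k]^n$ realizing $[a,n-t]_k^n$ under the stated divisibility hypothesis. I would first observe that the natural inductive reduction of Proposition~\ref{induction} does not help eliminate the parameter $a$ here: peeling off one unit would call for the instance $[a-1,n-t-1]_k^{n-k}$, whose larger mark value $n-t-1$ exceeds the available dimension $n-k$ precisely when $t<k-1$ (a point lies on only $n-k$ lines after the reduction). Hence a direct construction is unavoidable, and this is where I would exploit that $k$ is prime, so that $[k]^n$ carries the structure of the vector space $\mathbb{F}_k^n$ and the $k$-modulo parity $\oplus$ and its refinements become linear-algebraic objects.

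The plan is to introduce a number-theoretic characteristic function $\bd{\chi}\colon[k]^n\to V$ whose level sets partition the grid, and to arrange the marking so that the number of marks received by a point $\bd{y}$ depends only on $\bd{\chi}(\bd{y})$, taking the two prescribed values $a$ and $n-t$. Writing $\bd{e}_i$ for the $i$th coordinate direction and ``moving with distance $s$'' for the translation $\bd{y}\mapsto\bd{y}+s\,\bd{e}_i$, the heart of the argument is the symmetric property advertised in the introduction, which I would isolate as a standalone lemma: for every nonzero shift $s$ and every base point $\bd{x}$, the assignment $i\mapsto\bd{\chi}(\bd{x}+s\,\bd{e}_i)$ is a bijection from the $n$ directions onto $V$, so in particular $\card{V}=n$. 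Over the field $\mathbb{F}_k$ this should reduce to the nonsingularity of a tailored system of affine forms, and it is exactly here that the primality of $k$ is used, since every nonzero shift $s$ must act invertibly.

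With the lemma in hand the construction proceeds in two coupled steps. For the line condition I would select, on each line $\bd{x}_{-i}$, the point whose $\bd{\chi}$-value matches a prescribed target attached to direction $i$; a companion property of $\bd{\chi}$ (injectivity along each line, now fixing $i$ and varying the offset $s$) guarantees that exactly one point per line is singled out. For the counting, since the per-point mark count is governed by $\bd{\chi}(\bd{y})$, the symmetric property lets me choose the collection of targets so that each point receives exactly $a$ or exactly $n-t$ marks, the two count-classes having the cardinalities $\frac{k(n-t)-n}{\,(n-t)-a\,}k^{n-1}$ and $\frac{n-ka}{\,(n-t)-a\,}k^{n-1}$ forced by Proposition~\ref{necessary}. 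The divisibility hypothesis ``$\frac{n-ka}{n-t-a}k^{n-1}\in\mathbb{Z}_{\ge0}$'' enters precisely at this last point, as the integrality needed for the level sets of $\bd{\chi}$ to be apportioned into the two count-classes in the correct proportion.

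The main obstacle I anticipate is designing $\bd{\chi}$ so that both requirements hold \emph{simultaneously}: the line condition is a ``fix the direction $i$, vary the offset $s$'' statement, whereas the count control is the ``fix $s$, vary $i$'' symmetric property, and these pull on the same function in opposite ways. Reconciling them, and then matching the resulting fiber sizes of $\bd{\chi}$ to the exact target cardinalities under the given divisibility condition, is the delicate part. I expect this to rule out the naive choice $\bd{\chi}=\oplus$, whose shifts are direction-independent and therefore violate the bijection property, and to force a carefully engineered algebraic form for $\bd{\chi}$ over $\mathbb{F}_k$ whose construction, rather than its verification, will be the technical core of the proof.
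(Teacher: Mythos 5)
Your overall instinct---partition the grid by a number-theoretic characteristic function with a ``fix the shift, vary the direction'' symmetry, and use the divisibility hypothesis to apportion the level sets---is the right one, and your observation that Proposition~\ref{induction} cannot eliminate $a$ here is correct. But the proposal stops exactly where the proof begins: the characteristic function is never constructed, and the framework you commit to cannot be realized as stated. You require a single $\bd{\chi}:[k]^n\to V$ such that $i\mapsto\bd{\chi}(\bd{x}+s\bd{e}_i)$ is a bijection onto $V$, forcing $\card{V}=n$; but for a linear map over $\mathbb{F}_k$ the image is a subgroup of some $\mathbb{F}_k^m$, so $\card{V}$ is a power of $k$, and $n$ is arbitrary. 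Likewise, your requirement that the mark count be a function of $\bd{\chi}(\bd{y})$ alone, with one fixed target value per direction, is too rigid: a construction of that shape would make every line's marked point depend only on local data in a way that cannot simultaneously produce two distinct counts $a$ and $n-t$ with the required multiplicities.

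The paper's proof resolves precisely these two tensions, and the resolution is the content of the theorem. First, the divisibility hypothesis is unpacked arithmetically: since $k$ is prime one writes $n-t-a=k^mr$ with $r\mid n-ka$, and sets $a'=((k-1)a-t)/r\le k^m$. The coordinates are then split into three blocks of sizes $k^mr$, $t$, $a$, and the characteristic function $\bd{q}$ is defined only on the first block, with range $[k]^m$ (not size $n$); indexing that block by $\mathbb{Z}_k^m\times[r]$, the symmetry becomes ``for each nonzero $s$, the values $\bd{q}(\bd{x}-s\bd{e}_{i,j})$ sweep out $[k]^m$ exactly $r$ times each,'' which is where primality of $k$ enters. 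Second, the mark count is controlled not by $\bd{q}$ alone but by the pair $(\oplus,\bd{q})$: one fixes a set $M$ of points with $\oplus=0$ lying in $a'$ chosen $\bd{q}$-classes, partitions the index set $[a']\times[r]$ into $L_1,\dots,L_{k-1}$ of sizes $a-1$ ($t$ of them) and $a$ (the rest), and marks each first-block line at parity $s$ when it meets $M$ at an index in $L_s$, at parity $0$ otherwise; the $\bd{y}$- and $\bd{z}$-lines are marked purely by parity. The two counts then arise as $a=(a-1)+1+0$ or $a=0+0+a$ versus $n-t=k^mr+0+a$, depending on the parity class and on membership in $M$. None of this adaptive structure---the three-block split governed by $k^mr$, the set $M$, or the partition into the $L_s$---is recoverable from your outline, so the proposal has a genuine gap rather than an alternative route.
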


\begin{proof}
Firstly we do some elementary number theory substitution to make the parameters more manageable. Suppose that $(\frac{n-ka}{n-t-a})k^{n-1}$ is a nonnegative integer. Since $k$ is a prime, there exists $m,r\in \mathbb{N}$
so that $n-t-a = k^m r$, where $m \le n-1$, $r|n-ka$, and $n\ge ka$.
($n-t\ge a$ implicitly holds.) Observe 
that $r|n-ka$ and $r|n-t-a$ implies that $r|(k-1)a-t$. Let $(k-1)a-t = ra'$, where
$$
a'=\frac{(k-1)a-t}{r}=\frac{(ka-a-t)k^m}{n-t-a}\leq \frac{(n-a-t)k^m}{n-t-a}=k^m.
$$
Thus, $n = t+a+k^m r$ and $(k-1)a = t+ra'$, where $a'$, $m$ and $r$ are all nonnegative integers.

Now we construct a marking of lines in $[k]^{n}$ so that each point is marked either $(n-t)$ times or $a$ times. For convenience, we partition the $n = a+t+k^m r$ dimensions into three groups, each of which contains $k^m r$, $t$, and $a$ coordinates respectively\footnote{When $t=0$, then there is only two groups, the proof still holds.}, and represent each point in $[k]^{n}$ by $(\bd{x},\bd{y},\bd{z})$ where $\bd{x} \in [k]^{k^m r}$, $\bd{y} \in [k]^t$, and $\bd{z} \in [k]^a$. Furthermore, we index $\bd{x}$ by a pair $(i,j)\in \mathbb{Z}_k^m\times [r]$, i.e. $x_{i,j}\in [k]$ are the coordinates of $\bd{x}$~(where $i \in \mathbb{Z}_k^m, j\in [r]$)\footnote{Since we need to take some ring operations on the index $i$, we index $i$ by $\mathbb{Z}_k^m$ here instead of $[k]^m$.}. For a point $(\bd{x},\bd{y},\bd{z})\in [k]^n$ and $i\in \mathbb{Z}_k^m, j\in[r]$, denote by $(\bd{x}_{-(i,j)},\bd{y},\bd{z})$ the lines of $[k]^{n}$ for which all the coordinates are fixed except the coordinate $(i,j)$ of $\bd{x}$, i.e.
\[(\bd{x}_{-(i,j)},\bd{y},\bd{z})=\{(\tilde{\bd{x}},\tilde{\bd{y}},\tilde{\bd{z}})\in [k]^n~|~\tilde{\bd{y}}=\bd{y},\tilde{\bd{z}}=\bd{ z},\forall~ (i',j')\neq (i,j)~\tilde{x}_{i',j'}=x_{i',j'}, \tilde{x}_{i,j}\in[k]\}
\]
Similarly $(\bd{ x},\bd{y}_{-i},\bd{z})~(i\in [t])$ is a line of $[k]^{n}$ which the $i$-th coordinate of $\bd{y}$ is unfixed, $(\bd{x},\bd{y},\bd{z}_{-i})~(i\in [a])$ is a line which the $i$-th coordinate of $\bd{z}$ is unfixed.

\vskip10pt
For each $\bd{x}\in [k]^{k^m r}$, define the {\em characteristic function} $\bd{q} : [k]^{k^m r} \rightarrow [k]^m$ as follows:
\begin{equation}
    \bd{q}(\bd{x}) = \sum_{\bd{i}=(i_1,\ldots,i_m)\in \mathbb{Z}_k^m} \left(\sum_{j=1}^r x_{i,j}\right)\cdot \bd{i} \label{charFun}
\end{equation}
where $\bd{i}$ is
$k$-based representation of $i$ in $\mathbb{Z}^m$, and the operations $+$ and $\cdot$ are over $\mathbb{Z}_k$. According to the characteristic value $\bd{q}(\bd{x})$, we can group the points in $[k]^{n}$ into equivalence classes. Specifically, for any $\bd{w} \in [k]^m$, let $Q(\bd{w})$ be the collection of points in $[k]^n$ which have characteristic value $\bd{w}$, i.e.
\[
    Q(\bd{w}) = \left\{ (\bd{x},\bd{y},\bd{z}) \in [k]^n \mid \bd{q}(\bd{x}) = \bd{w} \right\}.
\]

Arbitrarily choose $a'$ different values $\bd{w}_1,\dots,\bd{w}_{a'}$ from $[k]^m$, for example the first $a'$ elements in the lexicographical order (recall that $0 \le a' \le k^m$), and let $M$ be the collections of points which have one of these $a'$ values as characteristic value and $0$ as $k$-modulo parity, i.e.,
\[
    M = \left( Q(\bd{w}_1)\cup\dots\cup Q(\bd{w}_{a'})\right) \cap \{(\bd{x},\bd{y},\bd{z}) \in [k]^n \mid \oplus(\bd{x},\bd{y},\bd{z}) = 0\}.
\]
We arbitrarily partition the set $[a']\times [r]$~(recall that $[a']\times [r] \subset [k^m]\times [r]$ is the indices set of $\bd{x}$) into $(k-1)$ subsets $L_1\sqcup L_2\sqcup \cdots \sqcup L_{k-1}$ with the requirement that the cardinality $\card{L_1} = \dots = \card{L_t} = a-1$ and $\card{L_{t+1}} = \dots = \card{L_{k-1}} = a$. (Notice that $t(a-1)+(k-1-t)a = (k-1)a-t = a'r$.)

\vskip10pt

Based on these preparations, now we give the construction of the desired marking of $[k]^n$. There are three different types of lines: $(\bd{x}_{-(i,j)}, \bd{y},\bd{z})$, $(\bd{x},\bd{y}_{-i},\bd{z})$, and $(\bd{x},\bd{y},\bd{z}_{-i})$,
we mark them as follows:

\begin{enumerate}
    \item for the line $(\bd{x}_{-(i,j)}, \bd{y},\bd{z})$, there are two sub-cases:
        \begin{itemize}
            \item $(\bd{x}_{-(i,j)}, \bd{y},\bd{z})\cap M\neq \emptyset$, i.e. on line $(\bd{x}_{-(i,j)}, \bd{y},\bd{z})$ there exists some point belongs to set $M$. (by the condition that $\oplus(\bd{x},\bd{y},\bd{z})=0$, this point is unique, if exists). Suppose the point is $(\tilde{\bd{x}},\bd{y},\bd{z})\in (\bd{x}_{-(i,j)}, \bd{y},\bd{z})\cap M$, and suppose that $( \tilde{\bd{x}},\bd{y},\bd{z}) \in Q(\bd{w}_{i_0})$ for some $i_0 \in [a']$, and $(i_0,j) \in L_s$ for some $s\in [k-1]$ (recall that $L_1,\dots, L_{k-1}$ is a partition of $[a']\times [r]$). Then we mark the point $(\tilde{\bd{x}}+s\cdot \bd{e}_{i,j},\bd{y},\bd{z})\in (\bd{x}_{-(i,j)}, \bd{y},\bd{z})$, where $\bd{e}_{i,j}=(0,\ldots,0,1,0,\ldots,0)$ is the unit vector in $[k]^{k^m r}$ for which only $x_{i,j} = 1$, and all other coordinates equal $0$. The addition and multiplication are over $\mathbb{Z}_k$. This is also the unique point on this line which has $\oplus(\cdot)=s$;

        \vskip5pt
            \item otherwise $(\bd{x}_{-(i,j)}, \bd{y},\bd{z})\cap M=\emptyset$, mark the unique point $(\tilde{\bd{x}},\bd{y},\bd{z})$ on the line such that $\oplus(\tilde{\bd{x}},\bd{y},\bd{z}) = 0$.
        \end{itemize}

        \vskip5pt

    \item for line $(\bd{x},\bd{y}_{-i},\bd{z})~(i\in [t])$, mark the unique point $(\bd{x},\tilde{\bd{y}},\bd{z})$ which satisfies $\oplus(\bd{x},\tilde{\bd{y}},\bd{z}) = i$. (recall that $1\leq i\leq t\leq k-1$)

        \vskip5pt
    \item for line $(\bd{x},\bd{y},\bd{z}_{-i})~(i \in [a])$, mark the point $(\bd{x},\bd{y},\tilde{\bd{z}})$ which satisfies $\oplus(\bd{x},\bd{y}, \tilde{\bd{z}}) = 0$.
\end{enumerate}

\vskip5pt
 We claim that the construction above is indeed a $[a,n-t]_{k}^n$. We need to check that each point $(\bd{x},\bd{y},\bd{z})$ is marked either $a$ times or $(n-t)$ times.

\begin{enumerate}
    \item if $\oplus(\bd{x},\bd{y},\bd{z}) = 0$, then
         for each line $(\bd{x},\bd{y}_{-i},\bd{z})$, we never mark the point $(\bd{x},\bd{y},\bd{z})$ (recall that we mark some point which has $\oplus(\cdot)=i \neq 0$). On the contrary, for each line $(\bd{x},\bd{y},\bd{z}_{-i})$, we always mark $(\bd{x},\bd{y},\bd{z})$. For lines of the form $(\bd{x}_{-(i,j)},\bd{y},\bd{z})$, there are two sub-cases:

     \vskip5pt
        \begin{itemize}
            \item if $(\bd{x},\bd{y},\bd{z}) \in M$, then on the line $(\bd{x}_{-(i,j)},\bd{y},\bd{z})$ we mark the point $(\bd{x}+s \cdot \bd{e}_{i,j},\bd{y},\bd{z})$ for some $s>0$ by the construction, which is not $(\bd{x},\bd{y},\bd{z})$. Thus in this case, $(\bd{x},\bd{y},\bd{z})$ is marked $0$, $0$, and $a$ times in $\bd{x}$, $\bd{y}$ and $\bd{z}$'s directions respectively, hence $a$ times in total;

        \vskip5pt
            \item if $(\bd{x},\bd{y},\bd{z}) \not \in M$, then on the line $(\bd{x}_{-(i,j)},\bd{y},\bd{z})$ there is no point in $M$. Thus  we marked the point with $\oplus(\cdot)=0$, which is exactly point $(\bd{x},\bd{y},\bd{z})$ itself. In this case, $(\bd{x},\bd{y},\bd{z})$ is marked $k^mr$, $0$, and $a$ times in three groups of directions respectively, and $k^mr+0+a=n-t$ times in total.
        \end{itemize}
        Therefore,  $(\bd{x},\bd{y},\bd{z})$ is marked either $a$ or $(n-t)$ times.

    \vskip5pt

    \item if $\oplus(\bd{x},\bd{y},\bd{z}) = s$ for some $1\le s \le t$.
        Among lines $(\bd{x},\bd{y}_{-i},\bd{z})~(1\le i \le t)$, only on the line $(\bd{x},\bd{y}_{-s},\bd{z})$ we marked point $(\bd{x},\bd{y},\bd{z})$. On each line $(\bd{x},\bd{y},\bd{z}_{-i})~(i \in [a])$, we never mark $(\bd{x},\bd{y},\bd{z})$.

        By the definition of the characteristic function, $\bd{q}(\bd{x}-s\cdot \bd{e}_{i,j})$ are different for different $i$'s (here we use the fact that $k$ is a prime, hence $s$ is coprime to $k$ and has an inverse in $\mathbb{Z}_k$). Therefore there are exactly $a' \times r$ different pairs of $(i,j)$ such that line $(\bd{x}_{-(i,j)},\bd{y},\bd{z})$ contains a point $(\bd{x}-s \cdot \bd{e}_{i,j}, \bd{y}, \bd{z})\in M$. On exactly $\card{L_s} =a-1$ lines of these $a'\times r$ lines, point $(\bd{x},\bd{y},\bd{z})$ is marked. On all other lines, there is no point belong to $M$, thus we only mark the point with $\oplus(\cdot)=0$.
        Thus the point $(\bd{x},\bd{y},\bd{z})$ is marked $(a-1)$, $1$ and $0$ times in $\bd{x}$, $\bd{y}$, and $\bd{z}$'s directions, respectively, and in total $a$ times.

    \vskip5pt

    \item if $\oplus(\bd{x},\bd{y},\bd{z}) = s$ for some $s > t$. It is similar to the case above, except that we never mark point $(\bd{x},\bd{y},\bd{z})$ on lines of form $(\bd{x},\bd{y}_{-i},\bd{z})$, and on $\card{L_s} = a$ lines of form $(\bd{x}_{-(i,j)},\bd{y},\bd{z})$ we mark $(\bd{x},\bd{y},\bd{z})$. Therefore in this case $(\bd{x},\bd{y},\bd{z})$  is also marked exactly $a$ times.

\end{enumerate}
\qed
\end{proof}

\begin{theorem}\label{0b}
If $k$ is a prime, $0<b\leq n$, and $\left(\frac{kb-n}{b}\right)k^{n-1}$ is an integer, then we have $[0,b]_k^n$.
\end{theorem}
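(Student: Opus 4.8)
The plan is to imitate the architecture of the proof of Theorem~\ref{a,n-t}, but to build a partition of $[k]^n$ into equal classes of which some are marked $b$ times and the rest are marked $0$ times. First I would normalize the parameters. Writing $r=\gcd(n,b)$, the hypothesis that $\tfrac{(kb-n)k^{n-1}}{b}$ is a nonnegative integer together with $k$ being prime forces $\tfrac{b}{r}\mid k^{n-1}$, hence $b=k^m r$ for some $0\le m\le n-1$ with $r\mid n$; setting $g=n/r$, the bounds $b\le n$ and $kb\ge n$ give $k^m\le g\le k^{m+1}$. A direct computation then shows that the number of points to be marked $b$ times is $t=gk^{\,n-1-m}$ and the number of unmarked points is $s=(k^{m+1}-g)k^{\,n-1-m}$, so that $[k]^n$ should be split into $k^{m+1}$ classes of size $k^{\,n-1-m}$, exactly $g$ of which are ``marked''.

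Next I would produce these $k^{m+1}$ classes by refining the characteristic function of Theorem~\ref{a,n-t} with the parity function. Partition the $n$ coordinates into a block of $b=k^m r$ coordinates $\bd x$ indexed by $\mathbb{Z}_k^m\times[r]$ and a block of $n-b$ coordinates $\bd y$, keep $\bd q(\bd x)=\sum_{\bd i}(\sum_j x_{i,j})\,\bd i\in\mathbb{Z}_k^m$, and assign to each point the class $(\bd q(\bd x),\oplus(\bd x,\bd y))\in\mathbb{Z}_k^m\times\mathbb{Z}_k$. The crucial symmetry survives: moving a point by a step $s\ne 0$ in the $(i,j)$ coordinate shifts its class by $s\,(\bd i,1)$, so along any line the $k$ classes form a full affine line of $\mathbb{Z}_k^{m+1}$ in a direction of nonzero parity-component. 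Since $k$ is prime, for any fixed step $s\ne0$ the char-value shifts by $s\,\bd i$, so each char-value is realized by a unique char-index $\bd i$ (and $r$ underlying coordinates), while the parity is shifted by $s$. I would then select a set $S$ of $g$ marked classes and, on every line, mark the point lying in a suitable class of $S$, redirecting via a partition of the relevant index set in the spirit of the $L_1\sqcup\dots\sqcup L_{k-1}$ of Theorem~\ref{a,n-t}.

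The verification would proceed, as before, by a case analysis on the class of a point $(\bd x,\bd y)$: using the uniqueness of the reaching direction one counts, separately in the $\bd x$- and $\bd y$-directions, how many lines mark the point, and checks that the total is $b$ when the class lies in $S$ and $0$ otherwise. Here the matching of the global totals (the sum over marked points of $b$ equals $nk^{n-1}$, which reduces to $gr=n$) is automatic and serves as a useful consistency check.

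The hard part will be the routing of the marks, for two reasons that are genuinely new compared with Theorem~\ref{a,n-t}. First, some points must receive \emph{zero} marks, a rigid global constraint: every one of the $n$ lines through an unmarked point must send its mark elsewhere, so the choice of $S$ and the redirection rule must cooperate so that no line ever lands in an unmarked class. Second, the char-index $\bd i=0$ carries an uneven multiplicity---it is shared by the $\bd y$-coordinates and the $(0,j)$ coordinates---so a naive ``one class per direction'' rule cannot balance every marked point to exactly $b$ across the whole range $k^m\le g\le k^{m+1}$; one must exploit the per-line freedom (different lines with the same char-index may be sent to different classes) and choose both $S$ and the index partition in a $g$-dependent way. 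Making these two requirements compatible is precisely the delicate generalization of the characteristic function that the construction needs, and I expect it to be the main obstacle.
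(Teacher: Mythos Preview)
Your general framework---labeling points by $(\bd q(\bd x),\oplus)\in\mathbb{Z}_k^{m+1}$, selecting a set $S$ of ``marked'' classes, and routing each line to a class in $S$---is exactly the architecture the paper uses. But you have not located the two structural simplifications that dissolve the obstacles you yourself flag at the end.

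First, the paper invokes Proposition~\ref{mul} (if $[0,b]_k^n$ exists then so does $[0,br]_k^{rn}$) to reduce at once to $r=\gcd(n,b)=1$, i.e.\ $b=k^m$. This collapses your parameter $g$ to $n$ itself and removes the multiplicity $r$ from the picture. (The construction below in fact works verbatim for general $r$, so the reduction is a convenience rather than a necessity; either way it is a free move you should take.)

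Second---and this is the substantive point you are missing---the paper replaces your two-way split by a \emph{three-way} split. Write $n=tb+h$ with $1\le t<k$, $0\le h\le b$, and partition the coordinates as $\bd x\in[k]^{b}$, $\bd y\in[k]^{(t-1)b}$ indexed by $[t-1]\times[b]$, and a residual block $\bd z\in[k]^{h}$. With this split both $S$ and the routing become trivial: take $S$ to be \emph{all} classes with parity in $\{1,\dots,t\}$ together with $h$ chosen classes of parity $0$ (those with $\bd q(\bd x)\in\{\bd w_1,\dots,\bd w_h\}$, forming the set $M$); on an $\bd x$-line mark the $M$-point if present and otherwise the parity-$1$ point; on a $\bd y_{-(i,j)}$-line mark the parity-$(i{+}1)$ point; on a $\bd z$-line mark the parity-$1$ point. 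There is \emph{no} $L_s$-type partition at all. The verification is a short case split on $\oplus$, each case yielding $b$ or $0$ marks immediately.

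In particular, your ``uneven multiplicity at $\bd i=0$'' worry is absorbed by the $\bd z$-block of size $h$, which exactly compensates the $h$ defections of $\bd x$-lines to $M$ at parity $1$; and the zero-mark constraint is handled because every line is sent to a parity in $\{0,1,\dots,t\}$, so points of parity $>t$ (and parity-$0$ points outside $M$) are simply never hit. Your plan is not wrong, but you are anticipating a routing problem substantially harder than the one that actually arises; the three-way split and the specific choice of $S$ above are the missing ideas.
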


\begin{proof}
Suppose $\gcd(b,n)=r$, since $k$ is a prime number and $\left(\frac{kb-n}{b}\right)k^{n-1}\in \mathbb{Z}$, we have that $b=rk^m$, $n=rn'$ for some non-negative integer $m$ and $n'$. By the following Proposition, it suffices to prove the $r=1$ case.
\begin{proposition}[\cite{Butler11}]\label{mul}
	Given $[0,b]_k^n$, then for every $r$, we have $[0,br]_k^{rn}$.
\end{proposition}

Since $b \leq n \leq kb$, let $n=tb+h$, where $1 \leq t< k$ and $0 \leq h\leq b$. We partition the $n$ coordinates into three groups, each of which contains $k^m$, $(t-1)b$, and $h$ coordinates (note that now $b = k^m$), respectively\footnote{If $t=1$ or $h= 0$, one of the group probably vanishes, while the proof still holds.}. We represent each point in $[k]^n$ by $(\bd{x}, \bd{y}, \bd{z})$, where $\bd{x} \in [k]^{k^m}$, $\bd{y} \in [k]^{(t-1)b}$, and $\bd{z}\in [k]^h$. Notations $\left(\bd{x}_{-i},\bd{y},\bd{z}\right)$, $(\bd{x},\bd{y}_{-(i,j)},\bd{z})$ and $\left(\bd{x},\bd{y},\bd{z}_{-i}\right)$ are similarly defined as in the previous proof (note that for $(\bd{x},\bd{y}_{-(i,j)},\bd{z})$, the indexes $i\in [t-1],j\in [b]$).

Similarly we define the characteristic function $\bd{q} : [k]^{k^m } \rightarrow [k]^m$ as follows:
   \[
   	\bd{q}(\bd{x}) = \sum_{\bd{i} = (i_1,\dots, i_m) \in \mathbb{Z}_k^m} x_{i}\cdot\bd{i}.
   \]
   Let $Q(\bd{w}) = \{(\bd{x},\bd{y},\bd{z})\in [k]^n \mid \bd{q}(\bd{x}) = \bd{w}\}$ as usual, and the notation of $\bd{i}$ and $\cdot, +$ are the same as in the proof of Theorem \ref{a,n-t}.
   We arbitrarily choose $h$ different values $\bd{w_1},\bd{w_2},\ldots,\bd{w_h}$ from $[k]^m$, and let
   \[
   	M = \left( Q(\bd{w}_1)\cup\dots\cup Q(\bd{w}_{h})\right) \cap \{(\bd{x},\bd{y},\bd{z}) \in [k]^n \mid \oplus(\bd{x},\bd{y},\bd{z}) = 0\}.
   \]
   Now we describe the marking of the line (for some fixed point $(\bd{x},\bd{y},\bd{z})$):

   \begin{enumerate}
      \item
      	 For the line $(\bd{x_{-i}},\bd{y}, \bd{z})$, if there exists a point $(\tilde{\bd{x}}, \bd{y},\bd{z})\in M$ on it, then we mark this point. Otherwise we mark the unique point $(\tilde{\bd{x}}, \bd{y},\bd{z})$ with $k$-modulo parity $\oplus(\tilde{\bd{x}}, \bd{y},\bd{z}) = 1$.

    \vskip5pt
      \item
      	 For line $(\bd{x},\bd{y}_{-(i,j)}, \bd{z})~(i\in [t-1],j\in [b]$), we mark the point $(\bd{x}, \tilde{\bd{y}},\bd{z})$ on the line with parity $\oplus(\bd{x}, \tilde{\bd{y}},\bd{z}) = i+1$.

    \vskip5pt
      \item
      	 On line $(\bd{x}, \bd{y}, \bd{z}_{-i})~( i\in [h])$, we mark the unique point $(\bd{x}, \bd{y},\tilde{\bd{z}})$ with parity $\oplus(\bd{x}, \bd{y},\tilde{\bd{z}}) = 1$.
   \end{enumerate}

\vskip5pt
 We claim that the construction above is a $[0,b]_{k}^n$. We need to verify that each point $(\bd{x}, \bd{y}, \bd{z})$ is marked either $b = k^m$ times or unmarked. There are three cases:
   \begin{enumerate}
      \item
		$\oplus(\bd{x}, \bd{y}, \bd{z})=0$.
	 \begin{itemize}
	 	\item if $(\bd{x}, \bd{y}, \bd{z}) \in M$. The point is only marked by lines of the form $(\bd{x}_{-i}, \bd{y}, \bd{z})$. There are $b$ such lines;

	 	\item otherwise, the point is never marked.
	 \end{itemize}

    \vskip5pt
      \item
	 $\oplus(\bd{x}, \bd{y}, \bd{z})=1$. $\bd{q}(\bd{x}-\bd{e}_i)$ are pairwise different, thus on exactly $(b-h)$ lines $(\bd{x}_{-i}, \bd{y},\bd{z})$ there is no point in $M$. On these lines $(\bd{x}, \bd{y}, \bd{z})$ is marked. All the lines of form $(\bd{x}, \bd{y}_{-i}, \bd{z})$ will not mark the point $(\bd{x}, \bd{y}, \bd{z})$, and all $h$ lines of form $(\bd{x}, \bd{y}, \bd{z}_{-i})$ will mark this point. Thus  $(\bd{x}, \bd{y}, \bd{z})$ is marked $b$ times in total.

    \vskip5pt
      \item
         $2 \le \oplus(\bd{x}, \bd{y}, \bd{z}) \le t$. The point is marked on all the line of the form $(\bd{x},\bd{y}_{-(i,j)}, \bd{z})$ where $i = \oplus(\bd{x}, \bd{y}, \bd{z}) - 1$ and $j\in [b]$. There are $b$ such lines.

    \vskip5pt
      \item
	 $\oplus(\bd{x}, \bd{y}, \bd{z}) > t$. The point is not marked.
   \end{enumerate}
\qed
\end{proof}

Now we are ready to present our main theorem.

\begin{restate}[Restated]
For prime $k$ and $0\leq a < b\leq n$, there exists a marking of lines in $[k]^n$ so that each point is marked either $a$ times or $b$ times if and only if there are nonnegative integers $s$ and $t$ so that $s+t = k^n$ and $as+bt = nk^{n-1}$.
\end{restate}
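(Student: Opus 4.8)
The plan is to prove Theorem~\ref{primeTheorem} by deriving the sufficiency direction from the machinery already assembled, since the necessity is handled by Proposition~\ref{necessary}. First I would observe that by Proposition~\ref{necessary} the existence of nonnegative integers $s,t$ with $s+t=k^n$ and $as+bt=nk^{n-1}$ is equivalent to the two divisibility conditions $(b-a)\mid k^{n-1}(kb-n)$ and $(b-a)\mid k^{n-1}(n-ka)$ together with the sign constraints $ka\le n\le kb$. So the goal reduces to: whenever these number-theoretic conditions hold, construct an $[a,b]_k^n$ marking.

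The main engine is the reduction in Proposition~\ref{induction}, which lets me pass from $[a,b]_k^n$ down to $[a-1,b-1]_k^{n-k}$ (reading the proposition in reverse). The idea is to apply this reduction repeatedly, decreasing both $a$ and $b$ by $1$ and $n$ by $k$ at each step, until I land in one of two unavoidable base cases: either $a$ has been driven down to $0$, giving a target of the form $[0,b']_k^{n'}$, or the upper parameter satisfies $b'>n'-k$, i.e.\ $b'=n'-t$ for some $0\le t<k$, giving a target $[a',n'-t]_k^{n'}$. The crux of making this induction legitimate is to check that the divisibility/solvability condition is \emph{preserved} under the reverse reduction. Here I would argue that replacing $(a,b,n)$ by $(a-1,b-1,n-k)$ leaves $b-a$ unchanged, shifts $kb-n$ by $k(b-1)-(n-k)-(kb-n)=-k+k=0$ and similarly leaves $n-ka$ invariant up to the same bookkeeping; hence the solvability of the linear system is exactly inherited, so every intermediate instance encountered is itself solvable and in particular the base case reached is solvable. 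This lets me invoke Theorem~\ref{a,n-t} for the $[a',n'-t]_k^{n'}$ base case and Theorem~\ref{0b} for the $[0,b']_k^{n'}$ base case, each of which produces the required marking directly from the solvability hypothesis.

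So the overall skeleton is: (i) translate the existence of $s,t$ into divisibility conditions via Proposition~\ref{necessary}; (ii) run the descent through Proposition~\ref{induction} in reverse, terminating in one of the two base configurations; (iii) verify the solvability invariant along the descent so the reached base case satisfies the hypotheses of Theorem~\ref{a,n-t} or Theorem~\ref{0b}; (iv) apply the appropriate base-case theorem to obtain the marking, then lift it back up through the chain of reductions to recover $[a,b]_k^n$. For $k=2$ I would simply cite Buhler et al.~\cite{Buhler10}, so the argument only needs the odd-prime case where the characteristic-function constructions apply.

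The step I expect to be the main obstacle is the bookkeeping in (iii): showing that the integrality condition $(\tfrac{n-ka}{n-t-a})k^{n-1}\in\mathbb{Z}$ (for Theorem~\ref{a,n-t}) or $(\tfrac{kb-n}{b})k^{n-1}\in\mathbb{Z}$ (for Theorem~\ref{0b}) is exactly what survives the descent, and that one always terminates in a base case whose parameters meet the additional structural constraints those theorems impose (for instance $1\le a<n-t$ and $0\le t\le k-1$ in Theorem~\ref{a,n-t}, or $0<b\le n$ in Theorem~\ref{0b}). This requires careful tracking of which of the two termination branches occurs and confirming the sign and range constraints $ka\le n\le kb$ remain consistent throughout, rather than any genuinely new construction; the heavy lifting has already been done inside the two base-case theorems.
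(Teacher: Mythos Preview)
Your plan is essentially the paper's own proof: induct on $n$, with Theorem~\ref{0b} handling the $a=0$ branch, Theorem~\ref{a,n-t} handling the $b>n-k$ branch, and Proposition~\ref{induction} reducing the remaining case $(a>0,\ b\le n-k)$ to $[a-1,b-1]_k^{n-k}$. One caution on your invariance argument: the observation that $kb-n$ and $n-ka$ are unchanged under $(a,b,n)\mapsto(a-1,b-1,n-k)$ does \emph{not} by itself give inheritance of solvability, because the available power of $k$ drops from $k^{n-1}$ to $k^{n-k-1}$; the paper closes this by writing $b-a=rk^d$ with $\gcd(r,k)=1$, noting $r\mid n-ka$ directly, and using $k^d\le b-a\le n-k-1$ to force $d\le n-k-1$, so $k^d\mid k^{n-k-1}$. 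You correctly flag this bookkeeping as the main obstacle, and once you fill it in as above the argument is complete.
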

\begin{proof}
The necessary part of the theorem is trivial and has been shown in the preliminary section of the introduction. The proof of sufficiency is an induction on $n$ by essentially using Theorem~\ref{a,n-t} and \ref{0b} as base steps.

The $n = 1$ case is obvious. Assume that for any $n \le m-1$, the theorem holds. Now we prove the theorem for $n = m$. There are three possible occasions:
\begin{enumerate}
\item If $a = 0$, then by Theorem~\ref{0b} the theorem holds.

\item If $b > m-k$, by Theorem~\ref{a,n-t} the theorem holds.

\item If $a > 0$ and $b \le m-k$. Assume
$$s=\frac{k^{m-1}(kb-m)}{b-a}, \ \ t=\frac{k^{m-1}(m-ka)}{b-a}$$ are both nonnegative integers.
We claim that both
$$s'=\frac{k^{m-k-1}[k(b-1)-(m-k)]}{b-a}\ \makebox{ and }~t'=\frac{k^{m-k-1}[(m-k)-k(a-1)]}{b-a}$$ are nonnegative integers.
Suppose that $b-a = rk^d$, where $\gcd(r,k) = 1$. Thus we have that $r |m-ka$, since $r | k^{m-1}(m-ka)$ and $\gcd(r,k^{m-1}) = 1$. Since
$$k^d \le b- a \le m-k-1,$$
we have that $d\leq m-k-1$ and $k^d | k^{m-k-1}$. Then $rk^d | k^{m-k-1}(m-ka)$, that is
$$
b-a | k^{m-k-1}\left((m-k)-k(a-1)\right).
$$
Similar argument shows that
$$
b-a| k^{m-k-1}\left(k(b-1)-(m-k)\right).$$
Since $a>0$ and $b\leq m-k$, we still have $0\leq a-1\leq b-1\leq m-k$.

By invoking inductive hypothesis, we have that $[a-1,b-1]_k^{m-k}$ exists. Therefore, by applying Proposition~\ref{induction} we have that $[a,b]_k^m$ exists.
\end{enumerate}
\qed
\end{proof}

\section{$[0,b]_{k}^n$ for General $k$}\label{section-general}


In this section we prove the conjecture when $a=0$ for general $k$. 
Before proving the theorem, we provide a crucial building block of the proof, which can be viewed as a generalization of the characteristic function defined in Theorem~\ref{0b}.

\begin{proposition}\label{charGen}
If $b$ and $k$ are two integers so that each prime factor of $b$ is also a prime factor of $k$, that is, $k$ and $b$ can be decomposed as $k = p_1^{\alpha_1}\cdots p_l^{\alpha_l}$ and $b = p_1^{\beta_1}\cdots p_l^{\beta_l}$, for integers $\alpha_j > 0$ and $\beta_j \ge 0$,~(thus $[k]$ and $[b]$ can be viewed as $\mathbb{Z}_{p_1}^{\alpha_1}\times\cdots\times\mathbb{Z}_{p_l}^{\alpha_l}$ and $\mathbb{Z}_{p_1}^{\beta_1}\times\cdots\times\mathbb{Z}_{p_l}^{\beta_l}$, respectively), then there exists a linear characteristic function $\bd{q}: [k]^b \rightarrow [b]$ with the following property:

There exists $s^* \in [k]$, so that for any $\bd{x} \in [k]^b$, there is a unique index $i \in [b]$ satisfying that $\bd{q}(\bd{x} - s^* \cdot\bd{e}_i) = \bd{0}$, where $s^*\cdot \bd{e}_i$ is the vector in $[k]^b$ with all entries $0$ except that the $i$-th is $s^*$.
\end{proposition}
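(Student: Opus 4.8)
The plan is to read ``linear'' as ``homomorphism of abelian groups'' and to reduce the whole statement to the existence of a single bijection. Write $G=\mathbb{Z}_{p_1}^{\alpha_1}\times\cdots\times\mathbb{Z}_{p_l}^{\alpha_l}$ for the group we put on $[k]$ and $B=\mathbb{Z}_{p_1}^{\beta_1}\times\cdots\times\mathbb{Z}_{p_l}^{\beta_l}$ for the one on $[b]$, so that $|G|=k$ and $|B|=b$. Any linear $\bd{q}\colon G^b\to B$ has the form $\bd{q}(\bd{x})=\sum_{i\in[b]}\phi_i(x_i)$ for homomorphisms $\phi_1,\dots,\phi_b\in\mathrm{Hom}(G,B)$, and since $s^*\cdot\bd{e}_i$ alters only the $i$-th coordinate, linearity gives $\bd{q}(\bd{x}-s^*\cdot\bd{e}_i)=\bd{q}(\bd{x})-\phi_i(s^*)$. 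Thus it suffices to choose $s^*\in G$ and the $\phi_i$ so that $i\mapsto\phi_i(s^*)$ is a \emph{bijection} from $[b]$ onto $B$: then $\bd{q}$ is surjective (its image already contains every $\phi_i(s^*)$), so $\bd{q}(\bd{x})$ ranges over all of $B$, and for each $\bd{x}$ the unique $i$ with $\phi_i(s^*)=\bd{q}(\bd{x})$ is exactly the unique index satisfying $\bd{q}(\bd{x}-s^*\cdot\bd{e}_i)=\bd{0}$.

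To produce the bijection I would fix $s^*\in G$ whose $p_j$-component is nonzero for every $j$ (such an element exists because each $\alpha_j>0$), and show the evaluation map $\mathrm{ev}_{s^*}\colon\mathrm{Hom}(G,B)\to B$, $\phi\mapsto\phi(s^*)$, is surjective. Granting surjectivity, I pick one preimage $\phi^{(v)}$ for each $v\in B$, relabel the $b=|B|$ chosen homomorphisms by $i\in[b]$, and set $\phi_i=\phi^{(v_i)}$; then $i\mapsto\phi_i(s^*)=v_i$ is a bijection onto $B$, as required. To prove $\mathrm{ev}_{s^*}$ surjective I split by primes via the Chinese Remainder Theorem: homomorphisms between primary parts of distinct primes vanish, so $\mathrm{Hom}(G,B)=\prod_j\mathrm{Hom}(\mathbb{Z}_{p_j}^{\alpha_j},\mathbb{Z}_{p_j}^{\beta_j})$ and $\mathrm{ev}_{s^*}=\prod_j\mathrm{ev}_{s^*_j}$. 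Within a single prime $p_j$ both groups are $\mathbb{F}_{p_j}$-vector spaces and a homomorphism is just a matrix; since $s^*_j\neq\bd{0}$ spans a one-dimensional subspace, any linear map prescribed on that line extends to all of $\mathbb{Z}_{p_j}^{\alpha_j}$, so $\mathrm{ev}_{s^*_j}$ attains every element of $\mathbb{Z}_{p_j}^{\beta_j}$.

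The one genuinely delicate point — and the reason the statement represents $[k]$ as the elementary abelian group $\mathbb{Z}_{p_1}^{\alpha_1}\times\cdots$ rather than as the cyclic $\mathbb{Z}_k$ — is precisely this extension step. In the elementary abelian model the exponent of each primary part of $G$ is $p_j$, so a nonzero $s^*_j$ already has order $p_j$ and its image may be any element of $\mathbb{Z}_{p_j}^{\beta_j}$; had we used $\mathbb{Z}_{p_j^{\alpha_j}}$ instead, the image of a single element could not have order exceeding $p_j^{\alpha_j}$, and surjectivity would fail whenever $\beta_j>\alpha_j$ (for instance $k=2$, $b=4$). Everything else is bookkeeping. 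As a sanity check, the construction specializes correctly to the prime case of Theorem~\ref{0b}: there $G=\mathbb{Z}_k$, $B=\mathbb{Z}_k^m$, one takes $s^*=1$ and $\phi_i(x)=x\cdot\bd{i}$ with $\bd{i}$ ranging over $B$, so that $\phi_i(s^*)=\bd{i}$ indeed enumerates $B$.
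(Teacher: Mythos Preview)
Your proof is correct and structurally matches the paper's, but is framed more abstractly. The paper constructs $\bd{q}$ explicitly: it indexes coordinates by $\bd{i}\in[b]\cong\mathbb{Z}_{p_1}^{\beta_1}\times\cdots\times\mathbb{Z}_{p_l}^{\beta_l}$, defines an operator $\circledast:[k]\times[b]\to[b]$ by $\bd{u}\circledast\bd{v}=(u^1_0\cdot\bd{v}^1,\dots,u^l_0\cdot\bd{v}^l)$ (with $u^j_0$ a single coordinate of $\bd{u}^j\in\mathbb{Z}_{p_j}^{\alpha_j}$), sets $\bd{q}(\bd{x})=\sum_{\bd{i}}\bd{x}_{\bd{i}}\circledast\bd{i}$, and takes $\bd{s}^*=(1,\dots,1)$, so that $\bd{q}(\bd{x}-s^*\bd{e}_{\bd{i}})=\bd{q}(\bd{x})-\bd{i}$ directly. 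In your language this is $\phi_{\bd{i}}(u)=u\circledast\bd{i}$, with $\phi_{\bd{i}}(s^*)=\bd{i}$ the identity bijection on $B$. You instead prove abstractly that for any $s^*$ with all primary components nonzero the evaluation map $\mathrm{ev}_{s^*}:\mathrm{Hom}(G,B)\to B$ is surjective, then choose preimages. Your framing isolates exactly what is needed (the bijection $i\mapsto\phi_i(s^*)$) and explains clearly why the elementary-abelian model on $[k]$ is essential, a point the paper leaves implicit; the paper's version supplies a concrete formula that plugs directly into the subsequent marking construction. Your aside about $\bd{q}$ being surjective is unnecessary for the argument but harmless.
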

\begin{proof}
Define the a linear operator $\circledast: [k] \times [b] \rightarrow [b]$, as a generalization of multiplication of scalar and vector, as follows:

Suppose that $\bd{u} = (\bd{u}^1,\ldots,\bd{u}^l) \in \mathbb{Z}_{p_1}^{\alpha_1}\times\cdots\times\mathbb{Z}_{p_l}^{\alpha_l}(\cong[k])$, where each $\bd{u}^j \in \mathbb{Z}_{p_j}^{\alpha_j }$ can be represented as $\bd{u}^j = (u^j_{\alpha_j-1},\dots,u^j_{0})$. Similarly, suppose $\bd{v} = (\bd{v}^1,\ldots,\bd{v}^l)\in \mathbb{Z}_{p_1}^{\beta_1}\times\cdots\times\mathbb{Z}_{p_l}^{\beta_l}(\cong[b])$.
Define
\[
\bd{u} \; \circledast\; \bd{v} = (u^1_0\cdot \bd{v}^1, u^2_0\cdot \bd{v}^2, \ldots, u^l_0\cdot \bd{v}^l),
\]
where $u^j_0\cdot \bd{v}^j$ is the multiplication of a scalar and a vector over $\mathbb{Z}_{p_j}$.

Based on this $\circledast$ operator, define $\bd{q}: [k]^b \rightarrow [b]$ as follows:\footnote{Here we view $\bd{x}_i$ a vector in $[k] \cong \mathbb{Z}_{p_1}^{\alpha_1}\times\ldots\times\mathbb{Z}_{p_l}^{\alpha_l}$ as in the definition of $\circledast$.}
\[
   \bd{q}(\bd{x}) = \sum_{\bd{i} \in [b]\cong \mathbb{Z}_{p_1}^{\alpha_1}\times\cdots\times\mathbb{Z}_{p_l}^{\alpha_l} } \bd{x}_i \; \circledast \; \bd{i}.
\]

Now we prove that $\bd{q}(\bd{x})$ indeed has the desired property. Let $\bd{s}^* = (1,\ldots,1) \in \mathbb{Z}_{p_1}^{\alpha_1}\times\ldots\times\mathbb{Z}_{p_l}^{\alpha_l}(\cong[k])$ be the element with each entry 1\footnote{Since $s^*$ could be viewed both as an element in $[k]$ and a vector in $\mathbb{Z}_{p_1}^{\alpha_1}\times\ldots\times\mathbb{Z}_{p_l}^{\alpha_l}$, we use $s^*$ and $\bd{s}^*$ correspondingly.}, since $\circledast$ is a linear operator with addition over $\mathbb{Z}_{p_1}^{\alpha_1}\times\ldots\times\mathbb{Z}_{p_l}^{\alpha_l}$, we have that
\[
\bd{q}(\bd{x} - s^*\bd{e_i}) = \bd{q}(\bd{x}) - \bd{q}(s^*\bd{e_i}) = \bd{q}(\bd{x}) - \bd{s}^* \;\circledast\; \bd{i}.
\]
Since $\bd{s}^*$ is a vector with every entry 1, $\bd{s}^*\;\circledast\; \bd{i}  = \bd{i}$, hence the equation $\bd{q}(\bd{x} - s^*\bd{e_i})=0$ only holds for $\bd{i}=\bd{q}(\bd{x})$.
\qed
\end{proof}

Now we prove Theorem~\ref{a0Theorem} by using the above characteristic function $\bd{q}(\cdot)$.

\vskip5pt

\noindent {\it Proof of Theorem~\ref{a0Theorem}.} We only need to prove the sufficient part. Suppose that $s=\left(\frac{kb-n}{b}\right)k^{n-1}$ is a nonnegative integer. We factor $b = dr$, where $r$ is coprime with $k$ and each prime factor of $d$ is a prime factor of $k$. Thus we have $r | kb - n$, and since $d \le n$, we have $d | k^{n-1}$. By Proposition~\ref{mul} it is sufficient to prove the $r = 1$ case.

Suppose $n = tb+ h$, where $1\leq t<k$ and $0\leq h\leq b$. Similar to the approach of proving Theorem~\ref{0b}, we partition the $n$ coordinates into three groups, each of which contains $b$, $(t-1)b$ and $h$ coordinates, respectively. We represent a point in $[k]^n$ by $(\bd{x}, \bd{y}, \bd{z})$, where $\bd{x} \in [k]^{b}$, $\bd{y} \in [k]^{(t-1)b}$, and $\bd{z}\in [k]^h$. Let $\bd{q}(\bd{x})$ be the characteristic function which satisfies the condition in Proposition \ref{charGen}. Let $Q(\bd{w}) = \{(\bd{x},\bd{y},\bd{z})\in [k]^n~|~\bd{q}(\bd{x}) = \bd{w}\}$ as before. We arbitrarily choose $h$ different values $\bd{w_1},\ldots,\bd{w_h}$ from $[b]\cong \mathbb{Z}_{p_1}^{\beta_1}\times\cdots\times\mathbb{Z}_{p_l}^{\beta_l}$ (since $0\leq h\leq b$), and let
   \[
   	M = \left(Q(\bd{w_1})\cup \cdots\cup Q(\bd{w_h})\right) \cap \{(\bd{x},\bd{y}, \bd{z}) \mid \oplus(\bd{x},\bd{y}, \bd{z}) = \bd{0}\}.\footnote{We redefine $\oplus(\bd{a})$ as $a_1+a_2+\cdots+a_n$, the addition is over the group $\mathbb{Z}_{p_1}^{\alpha_1}\times\cdots\times\mathbb{Z}_{p_l}^{\alpha_l}(\cong[k])$. }
   \]

Now we describe the marking of lines. Considering the set $[k]$, $\bd{0}$ and $\bd{s}^*$ are two special elements in it. There are $k-2 \ge t-1$ elements left. Let $\tau $ be an arbitrary injective function that maps $[t-1]$ to $[k] \setminus\{\bd{0},\bd{s^*}\}$. Thus $|\tau([t-1])| = t-1$.
   \begin{enumerate}
      \item
      	 For line $(\bd{x_{-i}},\bd{y}, \bd{z})$~($i\in [b]$), if there exists a point $(\tilde{\bd{x}}, \bd{y},\bd{z})\in M$ on it, then we mark this point. Otherwise we mark the unique point $(\tilde{\bd{x}}, \bd{y},\bd{z})$ so that $\oplus(\tilde{\bd{x}}, \bd{y},\bd{z}) = \bd{s}^*$.

      \vskip5pt
      \item
      	 For line $(\bd{x},\bd{y}_{-(i,j)}, \bd{z})$~($i\in [t-1],j\in [b]$), we mark the point $(\tilde{\bd{x}}, \bd{y},\bd{z})$ on the line so that $\oplus(\tilde{\bd{x}}, \bd{y},\bd{z}) = \tau(i)$.

      \vskip5pt
      \item
      	 On line $(\bd{x}, \bd{y}, \bd{z}_{-i})~(i\in [h])$, we always mark the unique point  $(\tilde{\bd{x}}, \bd{y},\bd{z})$ with $\oplus(\tilde{\bd{x}}, \bd{y},\bd{z}) = \vec{s}^*$.
   \end{enumerate}

Next we prove that each point $(\bd{x}, \bd{y}, \bd{z})$ is marked either $b$ times or $a = 0$ time. There are three cases:
   \begin{enumerate}
      \item
		$\oplus(\bd{x}, \bd{y}, \bd{z})=0$. On lines of form $(\bd{x}, \bd{y}_{-(i,j)}, \bd{z})$ or $(\bd{x}, \bd{y}, \bd{z}_{-i})$, this point will never get marked.
	 \begin{itemize}
	 	\item if $(\bd{x}, \bd{y}, \bd{z}) \in M$. The point is marked by all lines of form $(\bd{x}_{-i}, \bd{y}, \bd{z})$. There are exactly $b$ such lines;

    \vskip3pt
	 	\item otherwise, the point is never marked.
	 \end{itemize}

    \vskip5pt
      \item
	 $\oplus(\bd{x}, \bd{y}, \bd{z})=\vec{s}^*$. By Proposition~\ref{charGen}, $\bd{q}(\bd{x}-\bd{s}^*\bd{e}_i)$ are pairwise different. On exactly $h$ lines of form $(\bd{x}_{-i}, \bd{y},\bd{z})$, there is a point in $M$. Therefore on $(b-h)$ lines of such form, $(\bd{x}, \bd{y}, \bd{z})$ will be marked. All the lines of form $(\bd{x}, \bd{y}, \bd{z}_{-i})$ will mark this point as well. On lines of form $(\bd{x}, \bd{y}_{-(i,j)}, \bd{z})$, this point will not be marked. Thus it is marked $(b-h) + h = b$ times in total.

    \vskip5pt
      \item
         $\oplus(\bd{x}, \bd{y}, \bd{z}) \in \tau([t-1])$. The point is marked on all the line of the form $(\bd{x},\bd{y}_{-(i,j)}, \bd{z})$ where $i = \tau^{-1}(\oplus(\bd{x}, \bd{y}, \bd{z}))$. Thus it is marked $b$ times.

    \vskip5pt
      \item
	 $\oplus(\bd{x}, \bd{y}, \bd{z}) \in [k]\setminus (\tau([t-1])\cup\{\bd{0},\bd{s^*}\})$. The point is never marked.
   \end{enumerate}
\qed

\section{Conclusion and Remarks}\label{section-conclusion}

In this work we investigate a conjecture of Butler and Graham on marking lines of $[k]^n$. We proved the necessary and sufficient condition of the existence of $[a,b]_k^n$ for the case when $k$ is a prime and the case when $a=0$ with general $k$.
 A natural open question is how to settle the remaining case of the conjecture, $[a,n-t]_k^n$ $(t<k)$ for general $k$.
The proof of Theorem~\ref{a,n-t} actually can be generalized to the following case when $k$ is a prime power (with an additional constrain that $(n-t-a)$ contains more prime factors than $k$).
\begin{theorem}\label{theorem-5}
$[a,n-t]_k^n$ exists if $k=p^m$, $n=t+a+r p^s$, $n-ka=ru$ and $s\geq m$.
\end{theorem}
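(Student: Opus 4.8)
The plan is to mirror the construction in the proof of Theorem~\ref{a,n-t} essentially verbatim, replacing the single device that used primality---that every nonzero residue is invertible modulo a prime $k$---by a \emph{field-valued} characteristic function over $\mathbb{F}_{p^m}$. Since $k=p^m$, I would identify $[k]$ not merely with the additive group $\mathbb{Z}_p^m$ but with the field $\mathbb{F}_{p^m}$; this is exactly the structure that lets arbitrary nonzero ``shifts'' behave like invertible multipliers, which is what the counting in Theorem~\ref{a,n-t} requires.

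First I would do the number-theoretic bookkeeping. Writing $n=t+a+rp^s$ and $n-ka=ru$, one gets $(k-1)a-t=(p^m-1)a-t=rp^s-(n-ka)=r(p^s-u)$. I would index the $\bd{x}$-coordinates by $\mathbb{F}_{p^s}\times[r]$ and take the range of the characteristic function to be $\mathbb{F}_{p^s}$, so that the analogue of the quantity $a'$ is $a'=p^s-u$, a nonnegative integer with $0\le a'\le p^s$. One then checks that $[a']\times[r]$ can be partitioned into $p^m-1$ blocks $L_1,\dots,L_{p^m-1}$ with $\card{L_\sigma}=a-1$ for $t$ of the indices and $\card{L_\sigma}=a$ for the remaining $p^m-1-t$, which is consistent because $t(a-1)+(p^m-1-t)a=(p^m-1)a-t=a'r$.

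The key step is the characteristic function itself. Viewing $[k]\cong\mathbb{F}_{p^m}\subseteq\mathbb{F}_{p^s}$ as a subfield, I would define $\bd{q}:[k]^{rp^s}\to\mathbb{F}_{p^s}$ by $\bd{q}(\bd{x})=\sum_{i\in\mathbb{F}_{p^s}}\big(\sum_{j=1}^{r}x_{i,j}\big)\cdot i$, with the product taken in $\mathbb{F}_{p^s}$. Linearity gives $\bd{q}(\bd{x}-\bd{\sigma}\,\bd{e}_{i,j})=\bd{q}(\bd{x})-\bd{\sigma}\cdot i$ for any parity value $\bd{\sigma}\in\mathbb{F}_{p^m}$, and when $\bd{\sigma}\neq 0$ the map $i\mapsto\bd{q}(\bd{x})-\bd{\sigma}\cdot i$ is a bijection of $\mathbb{F}_{p^s}$, since multiplication by a nonzero field element is invertible. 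This is precisely the property that primality supplied in Theorem~\ref{a,n-t}, and it now holds for \emph{every} nonzero $\bd{\sigma}$, not only those coprime to $p$; so no ``bad'' parity class ever arises. With this in hand, the rest is a transcription: form $M$ as the intersection of $Q(\bd{w}_1)\cup\cdots\cup Q(\bd{w}_{a'})$ with the parity-$\bd{0}$ set, mark the three families of lines exactly as before (shift the unique $M$-point on an $\bd{x}$-line by $\bd{\sigma}\,\bd{e}_{i,j}$ according to its block $L_\sigma$, mark parity $\tau(i)$ on $\bd{y}$-lines and parity $\bd{0}$ on $\bd{z}$-lines), and run the same case analysis on $\oplus(\bd{x},\bd{y},\bd{z})$. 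The bijection above yields exactly $\card{L_\sigma}$ marks from the $\bd{x}$-direction for a point of nonzero parity $\bd{\sigma}$, so every point is marked $a$ or $n-t$ times. As in the general-$k$ argument, Proposition~\ref{mul} would reduce to the $\gcd$-coprime case first.

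I expect the main obstacle to be the interface between the range size and the index count when $m\nmid s$. The clean version above wants $\mathbb{F}_{p^m}$ to sit inside $\mathbb{F}_{p^s}$, which forces $m\mid s$; for the stated hypothesis $s\ge m$ in full generality one must instead realize the range as an $\mathbb{F}_{p^m}$-vector space on which every nonzero element of $\mathbb{F}_{p^m}$ still acts invertibly (e.g. $\mathbb{F}_{p^m}^{\lceil s/m\rceil}$ with an appropriate action), while simultaneously keeping $a'$ a nonnegative integer bounded by the number of available characteristic values. Reconciling these numerics---in particular the integrality of $a'=p^m-u/p^{s-m}$ under the weaker divisibility one gets when $m\nmid s$---with the hypotheses $s\ge m$ and $n-ka=ru$ is the delicate part; everything else follows the template of Theorem~\ref{a,n-t} with the field multiplication doing the work that primality did there.
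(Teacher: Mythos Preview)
Your plan is essentially the paper's own argument: the paper too splits $n=t+a+rp^s$ into three blocks, indexes $\bd{x}$ by $\mathbb{F}_{p^s}\times[r]$, takes an $\mathbb{F}_{p^s}$-valued characteristic function, and then declares that ``the way of marking lines and the proof of correctness is the same as that in the proof of Theorem~\ref{a,n-t}.'' So on the level of strategy you have exactly what the paper does.

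The one place you hesitate---how to multiply a shift $\sigma\in[k]\cong\mathbb{F}_{p^m}$ by an index $i\in\mathbb{F}_{p^s}$ when $m\nmid s$---is handled more simply than you fear. The paper inserts an auxiliary map $q':\mathbb{F}_{p^m}\to\mathbb{F}_{p^s}$ and sets
\[
q(\bd{x})=\sum_{i\in\mathbb{F}_{p^s}}q'\Bigl(\sum_{j=1}^r x_{i,j}\Bigr)\cdot i,
\]
so that $q(\bd{x}-\sigma\,\bd{e}_{i,j})=q(\bd{x})-q'(\sigma)\cdot i$. The paper calls $q'$ ``arbitrary''; for the counting to go through $q'$ must in fact be an $\mathbb{F}_p$-linear \emph{injection}, and such a map exists precisely because $m\le s$. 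This replaces the subfield embedding and removes the need for $m\mid s$. With this device the index set and the range of $q$ are both $\mathbb{F}_{p^s}$, so the number of characteristic values is $p^s$, your $a'=p^s-u$ is automatically an integer with $0\le a'\le p^s$, and the integrality worry you flag never arises. The partition of $[a']\times[r]$ into blocks $L_\sigma$ of sizes $a-1$ and $a$ and the ensuing case analysis then go through verbatim. One minor remark: your closing appeal to Proposition~\ref{mul} is out of place here, as that proposition is specific to the $a=0$ setting and plays no role in this argument.
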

\noindent The proof utilized the property of field $\mathbb{F}_{p^s}$. It is essentially similar to the proof of Theorem~\ref{a0Theorem} and we will put it in appendix. It is an interesting question to know whether the method here can be further generalized. The difficulty is that during this generalization, the strong symmetric property cannot be maintained.

\bibliographystyle{plain}

\bibliography{markline}

\section*{Appendix}

\begin{proof}[Theorem~\ref{theorem-5}(sketch)]
	We construct a marking of lines in $[k]^{n}$ so that each point is marked either $(n-t)$ times or $a$ times. First, similar to in the proof of Theorem~\ref{a,n-t}, we partition the $n = a+t+p^s r$ dimensions into three groups, each of which contains $p^s r$, $t$, and $a$ coordinates respectively, and represent each point in $[k]^{n}$ by $(\bd{x},\bd{y},\bd{z})$ where $\bd{x} \in [k]^{p^s r}$, $\bd{y} \in [k]^t$, and $\bd{z} \in [k]^a$. Then, we index $\bd{x}$ by a pair $(i,j)\in \mathbb{F}_p^s\times [r]$, i.e. $x_{i,j}\in [k]$ are the coordinates of $\bd{x}$~(where $i \in \mathbb{F}_p^s, j\in [r]$)\footnote{Here we use a different arithmetic system for $i$.}. We define lines $(\bd{x}_{-(i,j)},\bd{y},\bd{z})$, $(\bd{x},\bd{y}_{-i},\bd{z})$ and $(\bd{x},\bd{y},\bd{z}_{-i})$ in a similar way.

\vskip10pt
For each $\bd{x}\in [k]^{p^s r}$, we define the {\em characteristic function} $q : [k]^{p^s r} \rightarrow \mathbb{F}_{p^s}$ in a slightly different way:
\begin{equation}
    q(\bd{x}) = \sum_{i\in \mathbb{F}_{p^s}} q'\left(\sum_{j=1}^r x_{i,j}\right)\cdot i \label{charFun}
\end{equation}
$q'$ is an arbitrary function that maps $\mathbb{F}_{p^m}$ to $\mathbb{F}_{p^s}$. $q'(\sum_{j=1}^r x_{i,j})$ is an element in $\mathbb{F}_{p^s}$.

The way of marking lines and the proof of correctness is the same as that in the proof of Theorem~\ref{a,n-t}.
\qed
\end{proof}

\end{document}